\newtheorem{theorem}{Theorem}[section]
\newtheorem{lemma}[theorem]{Lemma}
\newtheorem{proposition}[theorem]{Proposition}
\newtheorem{definition}{Definition}
\newtheorem{example}[theorem]{Example}
\newtheorem{problem}[theorem]{Problem}
\title{H\"older-type inequalities and their applications to concentration and correlation bounds}
\author{Christos Pelekis\thanks{Department of Computer Science, KU Leuven,
Celestijnenlaan 200A,
3001, Belgium, Email: pelekis.chr@gmail.com}  \quad \quad
Jan Ramon\thanks{Department of Computer Science, KU Leuven,
Celestijnenlaan 200A,
3001, Belgium, Email: Jan.Ramon@cs.kuleuven.be}  \quad \quad Yuyi Wang\thanks{Department of Computer Science, KU Leuven,
Celestijnenlaan 200A,
3001, Belgium, Email: Yuyi.Wang@cs.kuleuven.be} }
\begin{document}

\maketitle

\begin{abstract} Let $Y_v, v\in V,$ be $[0,1]$-valued random variables having a dependency graph
$G=(V,E)$. We show that
\[   \mathbb{E}\left[\prod_{v\in V} Y_{v} \right]  \leq  
\prod_{v\in V} \left\{ \mathbb{E}\left[Y_v^{\frac{\chi_b}{b}}\right] \right\}^{\frac{b}{\chi_b}}, \]
where $\chi_b$ is the $b$-fold chromatic number of  $G$. This inequality may be seen as a
dependency-graph analogue of a generalised H\"older inequality, due to
Helmut Finner. 
Additionally,  
we provide applications of H\"older-type inequalities to 
concentration and correlation bounds for sums of weakly dependent random variables.
\end{abstract}

\noindent {\emph{Keywords}:  fractional chromatic number; Finner's inequality; Janson's inequality;
dependency graph; hypergraphs

\section{Introduction}\label{intro}

The main purpose of this article is to illustrate that certain 
H\"older-type inequalities can be employed in order to obtain 
concentration and correlation bounds for sums of, possibly dependent,
real-valued random variables whose
dependencies are described in terms of graphs, or hypergraphs.
Before being more precise, let us begin with some notation and definitions that will be fixed
throughout the text. \\
A \emph{hypergraph} $\mathcal{H}$ is a pair $(V,\mathcal{E})$ where $V$ is a finite set
and $\mathcal{E}$ is a family of subsets of $V$. The set $V$ is called the \emph{vertex set}
of $\mathcal{H}$ and the set $\mathcal{E}$ is called the \emph{edge set} of $\mathcal{H}$;
the elements of $\mathcal{E}$ are called \emph{hyperedges} or just edges.
The cardinality of the vertex set will be denoted by $|V|$ and the cardinality of the edge set
by $|\mathcal{E}|$.
A hypergraph is called $k$-\emph{uniform} if every edge from $\mathcal{E}$ has cardinality $k$.
A $2$-uniform
hypergraph is a \emph{graph}. The \emph{degree} of a vertex $v\in V$ is defined as the
number of edges that contain $v$. A hypergraph will be called $d$-regular if every vertex has degree $d$.
A subset $V'\subseteq V$ is called \emph{independent} if it does not contain any edge from $\mathcal{E}$.
A \emph{fractional matching} of a hypergraph, $\mathcal{H}=(V,\mathcal{E})$, is a function
$\phi:\mathcal{E}\rightarrow [0,1]$ such that $\sum_{e:v\in e}\phi(e)\leq 1$, holds true for all
vertices $v\in V$. The \emph{fractional matching number} of $\mathcal{H}$, denoted $\nu^*(\mathcal{H})$,
is defined as $\max_{\phi} \sum_{e\in \mathcal{E}} \phi(e)$ where the maximum runs over
all fractional matchings of $\mathcal{H}$.
The \emph{chromatic number} of a graph $G$ is defined in the following way.
A $b$-\emph{fold coloring} of $G$ is an  assignment of sets of size $b$ to the vertices of the graph
in such a way that adjacent vertices have disjoint sets. A graph is $(a:b)$-\emph{colorable} if it has
a $b$-fold coloring using $a$ different colors. The least $a$ for which the graph is $(a:b)$-colorable
is the $b$-\emph{fold chromatic number} of the graph, denoted $\chi_b(G)$. The \emph{fractional chromatic number}
of a graph $G$ is defined as $\chi^*(G)=\inf_b \frac{\chi_b(G)}{b}$.
Here and later, $\mathbb{P}[\cdot]$ and
$\mathbb{E}[\cdot]$ will denote probability and expectation, respectively. \\

Let us also recall H\"older's inequality. Let $(\Omega,\mathcal{A}, \mathbb{P})$
be a probability space. Let $A$ be a finite set and let
$Y_a,a\in A,$ be random variables from $\Omega$ into $\mathbb{R}$.  Suppose that
$w_a,a\in A$ are non-negative weights such that $\sum_{a\in A} w_a\leq 1$ and each $Y_a$ has
finite $\frac{1}{w_a}$-moment, i.e., $\mathbb{E}\left[Y_a^{1/w_a}\right]<+\infty$, for all $a\in A$.
H\"older's inequality asserts that
\[ \mathbb{E}\left[\prod_{a\in A}Y_a\right] \leq \prod_{a\in A} \mathbb{E}\left[Y_a^{1/w_a}\right]^{w_a}  . \]
This is a classic result (see \cite{Bogachev}).
In this article we shall be interested in applications of H\"older-type inequalities to concentration and
correlation bounds for sums of weakly dependent random variables. We focus on two particular
types of dependencies between random variables.
The first  one is described in terms of a hypergraph. \\

\begin{definition}[hypergraph-correlated random variables]\label{dephypergraph}
Let $\mathcal{H}=(V,\mathcal{E})$ be a hypergraph.
Suppose that $\{Y_e\}_{e\in \mathcal{E}}$ is a collection of real-valued
random variables, indexed by the edge set
of $\mathcal{H}$, that satisfy the following:
there exist \emph{independent} random variables $\{X_v\}_{v\in V}$  indexed by the vertex set $V$ such
that, for every edge $e\in \mathcal{E}$,   $Y_e = f_e(X_v;v\in e)$ is a
function that depends only on the random
variables $X_v$ with $v\in e$. We will refer to the  aforementioned random variables $\{Y_e\}_{e\in \mathcal{E}}$ as
$\mathcal{H}$-\emph{correlated}, or simply as \emph{hypergraph-correlated}, when there is no
confusion about the underlying hypergraph.
\end{definition}

Hypergraph-correlated random variables are encountered in the theory of random
graphs (see \cite{Gavinsky, Jansonthree, Wolfovitz} and references therein). 
Another type of ``dependency structure`` that plays a key role in 
probabilistic combinatorics and related areas involves the notion of dependency graphs (see \cite{Alon, Jansontwo}).  \\

\begin{definition}[Dependency graph]\label{depgraph}
A \emph{dependency graph} for the random variables $\{Y_v\}_{v\in V}$, indexed by a finite set $V$, is any loopless graph, $G=(V,E)$,
whose vertex set $V$ is the index set of the random variables and whose edge set
is such that if $V'\subseteq V$ and $v_i\in V$ is not incident to any vertex of $V'$, then  $Y_v$ is
mutually independent of the random variables $Y_{v'}$ for which $v' \in V'$.  We will refer
to  random variables $\{Y_v\}_v$ having a dependency graph $G$ as $G$-\emph{dependent} or
as \emph{graph-dependent}.
\end{definition}

If $\{Y_e\}_{e\in\mathcal{E}}$ are hypergraph-correlated random variables, then one can
define their dependency graph whose vertex set is $\mathcal{E}$ and with edges joining any two
sets $e,e'\in \mathcal{E}$ such that $e\cap e' \neq \emptyset$.
Hence a set of hypergraph-correlated random variables is graph-dependent. The reader might
wonder whether the converse holds true. We will see, using a particular
generalisation of H\"older's inequality,  that this is not the case (see Example \ref{examexam} below) and so the
aforementioned notions of dependencies are not equivalent.  \\
In the present paper we shall be interested in employing
H\"older-type inequalities in order to obtain concentration and
correlation bounds for sums of hypergraph-correlated  random variables
as well as for sums of graph-dependent
random variables. \\
The main results are stated in Section \ref{sectres} and the proofs are
contained in Sections  \ref{proofgraphFinner}
and \ref{Finnerappl}.

\section{Results}\label{sectres}
\subsection{H\"older-type inequalities}

We begin with the
following theorem, due to Helmut Finner, that provides
a generalisation of H\"older's inequality for hypergraph-correlated random variables.   \\

\begin{theorem}[Finner \cite{Finner}]
\label{mainthm} Let $\mathcal{H}=(V,\mathcal{E})$ be a hypergraph and let $\{Y_e\}_{e\in \mathcal{E}}$
be $\mathcal{H}$-correlated random variables.
If $\phi:\mathcal{E}\rightarrow [0,1]$ is a fractional matching of $\mathcal{H}$ then
\[\mathbb{E}\left[\prod_{e\in \mathcal{E}}Y_e\right] \leq \prod_{e\in \mathcal{E}}\left\{\mathbb{E}\left[Y_e^{1/\phi(e)}\right]\right\}^{\phi(e)} .\]
\end{theorem}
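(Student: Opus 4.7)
The natural strategy is induction on $|V|$, peeling off one vertex at a time and applying the classical weighted Hölder inequality to the edges incident with that vertex. The fractional matching condition is exactly what makes this work.

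More concretely, I would proceed as follows. Fix a vertex $v_0\in V$ and split the edge set as $\mathcal{E}=\mathcal{E}_0\cup\mathcal{E}_1$, where $\mathcal{E}_0=\{e\in\mathcal{E}:v_0\in e\}$ and $\mathcal{E}_1=\mathcal{E}\setminus\mathcal{E}_0$. Since each $Y_e$ with $e\in\mathcal{E}_1$ is a function of $\{X_v:v\in e\}$ with $v_0\notin e$, conditioning on $\mathcal{F}:=\sigma(X_v:v\neq v_0)$ leaves these factors fixed. Thus
\[
\mathbb{E}\!\left[\prod_{e\in\mathcal{E}}Y_e\right]
=\mathbb{E}\!\left[\prod_{e\in\mathcal{E}_1}Y_e\cdot \mathbb{E}\!\left[\prod_{e\in\mathcal{E}_0}Y_e\,\Big|\,\mathcal{F}\right]\right].
\]
The fractional matching condition at $v_0$ gives $\sum_{e\in\mathcal{E}_0}\phi(e)\leq 1$, so classical Hölder (with weights $\phi(e)$, $e\in\mathcal{E}_0$) applied to the inner conditional expectation yields
\[
\mathbb{E}\!\left[\prod_{e\in\mathcal{E}_0}Y_e\,\Big|\,\mathcal{F}\right]
\leq \prod_{e\in\mathcal{E}_0}\Bigl\{\mathbb{E}\!\left[Y_e^{1/\phi(e)}\,\big|\,\mathcal{F}\right]\Bigr\}^{\phi(e)}.
\]

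The next step is to absorb the right-hand side into a smaller instance of the theorem. I would define the reduced hypergraph $\mathcal{H}'=(V\setminus\{v_0\},\mathcal{E}')$, where $\mathcal{E}'=\mathcal{E}_1\cup\{e\setminus\{v_0\}:e\in\mathcal{E}_0\}$, together with $\phi'(e)=\phi(e)$ for $e\in\mathcal{E}_1$ and $\phi'(e\setminus\{v_0\})=\phi(e)$ for $e\in\mathcal{E}_0$. For any $v\neq v_0$ one checks immediately that $\sum_{e'\ni v}\phi'(e')=\sum_{e\ni v}\phi(e)\leq 1$, so $\phi'$ is again a fractional matching. For edges $e'\in\mathcal{E}_1$, keep $\tilde Y_{e'}=Y_{e'}$; for $e\in\mathcal{E}_0$, define $\tilde Y_{e\setminus\{v_0\}}:=\bigl\{\mathbb{E}[Y_e^{1/\phi(e)}\mid\mathcal{F}]\bigr\}^{\phi(e)}$. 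Each $\tilde Y_{e'}$ is a (measurable) function of $\{X_v:v\in e'\}$ only, so the family $\{\tilde Y_{e'}\}_{e'\in\mathcal{E}'}$ is $\mathcal{H}'$-correlated. Moreover, by the tower property,
\[
\mathbb{E}\!\left[\tilde Y_{e\setminus\{v_0\}}^{1/\phi'(e\setminus\{v_0\})}\right]
=\mathbb{E}\!\left[\mathbb{E}[Y_e^{1/\phi(e)}\mid\mathcal{F}]\right]=\mathbb{E}\!\left[Y_e^{1/\phi(e)}\right],
\]
so the target right-hand side is preserved.

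Putting the two pieces together, the displayed bound reduces the problem to showing $\mathbb{E}[\prod_{e'\in\mathcal{E}'}\tilde Y_{e'}]\leq \prod_{e'\in\mathcal{E}'}\mathbb{E}[\tilde Y_{e'}^{1/\phi'(e')}]^{\phi'(e')}$, which is the theorem for $\mathcal{H}'$ and thus follows from the induction hypothesis (the vertex set has shrunk by one). The base case $|V|=0$ is an empty product. Degenerate situations, namely edges $e$ with $\phi(e)=0$ or singleton edges $e=\{v_0\}$ producing the empty edge $\emptyset$ in $\mathcal{H}'$, are handled either by dropping zero-weighted edges at the outset (they contribute a factor $1$ under the convention $Y_e^{0}=1$) or by treating the empty edge as carrying the deterministic factor $\mathbb{E}[Y_e^{1/\phi(e)}]^{\phi(e)}$. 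The main delicate point is the bookkeeping in this inductive step, verifying both that $\phi'$ remains a fractional matching and that the $\tilde Y_{e'}$'s are genuinely hypergraph-correlated with respect to the reduced independent family $\{X_v:v\neq v_0\}$; once this is in place, the rest of the argument is a direct application of weighted Hölder and the tower property.
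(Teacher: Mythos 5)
Your argument is correct, but note that the paper does not actually prove Theorem \ref{mainthm}: it cites Finner \cite{Finner} for a proof ``based on Fubini's theorem and H\"older's inequality'' and Ramon et al.\ \cite{Ramon} for one using concavity of the weighted geometric mean and Jensen's inequality. Your induction on $|V|$ --- peel off $v_0$, condition on $\mathcal{F}=\sigma(X_v:v\neq v_0)$, apply weighted H\"older to the edges through $v_0$ (legitimate since $\sum_{e\ni v_0}\phi(e)\leq 1$), and pass to the reduced hypergraph --- is essentially a reconstruction of Finner's original Fubini-plus-H\"older route. The route the paper itself uses for the dependency-graph analogue (Theorem \ref{graphFinner}) is the other one: decompose the index set into independent classes and apply Jensen's inequality to the concave weighted geometric mean in one shot, with no induction. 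Your approach buys a self-contained proof from classical H\"older alone and makes transparent exactly where the fractional matching condition enters (once per vertex); the concavity route is shorter but needs the multilinear concavity lemma and, in the hypergraph setting, does not apply verbatim because the edges need not partition into fully independent classes --- which is presumably why an inductive/Fubini argument is needed here. Two small points to make explicit in a final write-up: (i) the $Y_e$ must be taken non-negative (as they implicitly are, for $Y_e^{1/\phi(e)}$ to be defined), since you multiply the pointwise conditional H\"older bound by $\prod_{e\in\mathcal{E}_1}Y_e$ before taking expectations; and (ii) the reduced edge set $\mathcal{E}'$ should be treated as an indexed family (a multiset), since distinct edges may collapse to the same subset of $V\setminus\{v_0\}$ --- the fractional matching condition for $\phi'$ is unaffected. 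Neither is a gap.
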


Notice that, by applying the previous result to the random variables $Z_e =Y_{e}^{\phi(e)}$, one concludes
$\mathbb{E}\left[\prod_{e\in \mathcal{E}}Y_e^{\phi(e)}\right] \leq \prod_{e\in \mathcal{E}}\left\{\mathbb{E}\left[Y_e\right]\right\}^{\phi(e)}$.
See \cite{Finner} for a proof of this result that is based on Fubini's theorem and H\"older's inequality.
Alternatively, see \cite{Ramon} for a proof that uses the concavity of the weighted geometric mean and
Jensen's inequality.
In other words, the previous result provides a H\"older-type inequality for
hypergraph-correlated  random variables which is formalised in terms of a fractional matching of
the underlying hypergraph.
In this article we provide an
analogue of Theorem \ref{mainthm} for random variables having a depenency graph $G$. The
corresponding H\"older-type inequality is formalised in terms of the $b$-fold chromatic number of
$G$. More precisely,
we obtain the following result. \\

\begin{theorem}\label{graphFinner} Let  $\{Y_v\}_{v}$ be real-valued random variables having a dependency
graph $G=(V,E)$.
Then, for every $b$-fold coloring of $G$ using $\chi_b:=\chi_b(G)$ colors, we have
\[ \mathbb{E}\left[\prod_{v\in V} Y_{v} \right]  \leq\prod_{v\in V} \left\{ \mathbb{E}\left[Y_{v}^{\frac{\chi_b}{b}}\right] \right\}^{\frac{b}{\chi_b}}. \]
\end{theorem}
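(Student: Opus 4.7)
The plan is to use the given $b$-fold coloring to decompose $\prod_{v\in V} Y_v$ into $\chi_b$ factors indexed by the colors, and then combine ordinary H\"older's inequality across these factors with the mutual independence that is available inside each color class.

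First, I would fix a $b$-fold coloring using $\chi_b$ colors, assigning to each vertex $v$ a set $S_v \subseteq \{1,\dots,\chi_b\}$ of size $b$, with $S_v \cap S_{v'} = \emptyset$ whenever $\{v,v'\}\in E$. For each color $c\in\{1,\dots,\chi_b\}$ I would introduce the class
$$V_c := \{v\in V : c\in S_v\}.$$
By the defining property of the coloring, no two elements of $V_c$ are adjacent in $G$, so $V_c$ is an independent set; Definition \ref{depgraph} then implies that $\{Y_v : v\in V_c\}$ is a mutually independent family.

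Second, since $|S_v|=b$, I would rearrange the product as
$$\prod_{v\in V}Y_v \;=\; \prod_{v\in V}\prod_{c\in S_v}Y_v^{1/b} \;=\; \prod_{c=1}^{\chi_b} Z_c, \qquad Z_c:=\prod_{v\in V_c}Y_v^{1/b},$$
and then apply ordinary H\"older's inequality to the $\chi_b$ factors $Z_c$ with the uniform weights $1/\chi_b$, obtaining
$$\mathbb{E}\Bigl[\prod_{c=1}^{\chi_b} Z_c\Bigr] \;\leq\; \prod_{c=1}^{\chi_b} \mathbb{E}[Z_c^{\chi_b}]^{1/\chi_b}.$$
Mutual independence inside each $V_c$ would then collapse $\mathbb{E}[Z_c^{\chi_b}] = \mathbb{E}\bigl[\prod_{v\in V_c} Y_v^{\chi_b/b}\bigr]$ to $\prod_{v\in V_c}\mathbb{E}[Y_v^{\chi_b/b}]$. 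Finally, since each vertex $v$ lies in exactly $|S_v|=b$ classes $V_c$ and each appearance contributes an exponent $1/\chi_b$, the total exponent attached to $\mathbb{E}[Y_v^{\chi_b/b}]$ will be $b/\chi_b$, yielding the claimed inequality.

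The argument is essentially careful bookkeeping glued to three standard ingredients: the combinatorial structure of a $b$-fold coloring, H\"older's inequality with equal weights, and mutual independence across independent sets of a dependency graph. I do not expect any serious obstacle; the only subtlety is an implicit non-negativity (or finite $\chi_b/b$-moment) assumption on the $Y_v$, needed to raise them to fractional powers and invoke H\"older, which is the same tacit assumption already present in Theorem \ref{mainthm}.
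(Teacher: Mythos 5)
Your proof is correct and follows essentially the same route as the paper: decompose the product over the $\chi_b$ color classes, use mutual independence within each class, and recombine using the fact that each vertex receives exactly $b$ colors. The only cosmetic difference is that you invoke classical H\"older's inequality with equal weights directly, whereas the paper derives that step from the concavity of the weighted geometric mean via Jensen's inequality (and your remark about the tacit non-negativity assumption is apt, since the theorem as stated says ``real-valued'' but fractional powers require $Y_v\geq 0$).
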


We prove Theorem \ref{graphFinner} in
Section \ref{proofgraphFinner}. The proof employs
the concavity of the weighted geometric mean and the definition of $b$-fold chromatic number.
In the remaining part of the current section we discuss applications of Theorem \ref{mainthm} and Theorem
\ref{graphFinner} to concentration and correlation bounds for sums of hypergraph-correlated
random variables as well as for sums of random variables having a dependency graph.
We begin with the later case.

\subsection{Applications}
\subsubsection{Dependency graphs}

Theorem \ref{graphFinner}, combined with standard techniques based on
exponential moments,
yields concentration inequalities for sums of random variables having a  dependency graph.
More precisely,
Theorem \ref{graphFinner} yields a new proof of  the following estimate on the probability
that the sum of graph-dependent random variables is significantly larger than its mean.\\

\begin{theorem}[Janson \cite{Jansonthree}]\label{chromaticthm}
Let  $\{Y_v\}_{v\in V}$ be $[0,1]$-valued random variables having a dependency
graph $G=(V,E)$.
Set $q:=\frac{1}{|V|} \mathbb{E}\left[\sum_v Y_v\right]$.  If
$t=n(q+\varepsilon)$ for some $\varepsilon>0$, then
\[ \mathbb{P}\left[\sum_v Y_v \geq t\right] \leq \text{exp}\left(  -\frac{2\varepsilon^2 |V|}{\chi^*}\right), \]
where $\chi^*=\chi^*(G_{\mathcal{H}})$ is the fractional chromatic number of $G$.
\end{theorem}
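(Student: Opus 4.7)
The plan is to run the standard exponential moment (Chernoff) argument, using Theorem \ref{graphFinner} in place of independence to handle the moment generating function of the sum. For $\lambda>0$, Markov's inequality gives
\[ \mathbb{P}\left[\sum_v Y_v\geq t\right]\leq e^{-\lambda t}\,\mathbb{E}\!\left[\prod_{v\in V} e^{\lambda Y_v}\right]. \]
Since each $e^{\lambda Y_v}$ is a measurable function of $Y_v$, the random variables $\{e^{\lambda Y_v}\}_v$ inherit the dependency graph $G$. Fix any $b$-fold coloring of $G$ using $\chi_b$ colors. Applying Theorem \ref{graphFinner} to these random variables yields
\[ \mathbb{E}\!\left[\prod_{v\in V} e^{\lambda Y_v}\right]\leq \prod_{v\in V}\left\{\mathbb{E}\!\left[e^{\lambda Y_v\chi_b/b}\right]\right\}^{b/\chi_b}. \]

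Next I would invoke Hoeffding's lemma: for any $[0,1]$-valued random variable $Y_v$ with mean $q_v:=\mathbb{E}[Y_v]$ and any $s\in\mathbb{R}$, one has $\mathbb{E}[e^{sY_v}]\leq \exp(sq_v+s^2/8)$. Setting $s=\lambda\chi_b/b$ and then raising to the power $b/\chi_b$, the right-hand side above is bounded by
\[ \prod_{v\in V}\exp\!\left(\lambda q_v+\tfrac{\lambda^2\chi_b}{8b}\right)=\exp\!\left(\lambda q|V|+\tfrac{\lambda^2|V|\chi_b}{8b}\right), \]
where I used $\sum_v q_v=q|V|$. Substituting $t=|V|(q+\varepsilon)$, the Markov bound becomes
\[ \mathbb{P}\left[\sum_v Y_v\geq t\right]\leq \exp\!\left(-\lambda\varepsilon|V|+\tfrac{\lambda^2|V|\chi_b}{8b}\right). \]

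I would then optimise the quadratic in $\lambda$ by taking $\lambda=4\varepsilon b/\chi_b$, which gives the exponent $-2\varepsilon^2|V|\cdot b/\chi_b$. Since the inequality holds for every $b$, taking the infimum over $b$ of $\chi_b/b$ (equivalently, the supremum of $b/\chi_b$) in the exponent yields the stated bound
\[ \mathbb{P}\left[\sum_v Y_v\geq t\right]\leq \exp\!\left(-\tfrac{2\varepsilon^2|V|}{\chi^*}\right), \]
using the definition $\chi^*=\inf_b \chi_b/b$.

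There is no serious obstacle in this argument: the main ingredients are (i) Theorem \ref{graphFinner}, which replaces the role that independence plays in the classical Chernoff proof, and (ii) Hoeffding's lemma, which is standard for $[0,1]$-valued random variables. The only mildly delicate points are checking that $\{e^{\lambda Y_v}\}_v$ really does inherit the dependency graph $G$ (which follows directly from Definition \ref{depgraph}, since replacing each $Y_v$ by a measurable function of itself does not create new dependencies) and ensuring the optimisation over $\lambda$ and the passage to the infimum over $b$ are done in that order, so that the constant $2$ in the exponent is not degraded.
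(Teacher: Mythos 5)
Your proof is correct. It shares its opening move with the paper's proof---Markov's inequality on the exponential moment followed by Theorem \ref{graphFinner} applied to the variables $e^{\lambda Y_v}$, which is exactly how the paper begins---but the two arguments diverge in how they bound the individual moment generating functions $\mathbb{E}\bigl[e^{sY_v}\bigr]$. You invoke Hoeffding's lemma, $\mathbb{E}[e^{sY_v}]\leq \exp(sq_v+s^2/8)$, which lets you sum the exponents directly (no need to pass through the average mean $q$) and leads to a clean quadratic optimisation in $\lambda$ with the optimal $\lambda=4\varepsilon b/\chi_b$ giving the exponent $-2\varepsilon^2|V|b/\chi_b$ immediately. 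The paper instead uses Lemma \ref{convorder} to replace each $Y_v$ by a Bernoulli variable of the same mean, then the weighted arithmetic--geometric means inequality to reduce to a single parameter $q$, optimises $h$ exactly to obtain the Chernoff--Hoeffding exponent $-\frac{b}{\chi_b}|V|\,D(q+\varepsilon\|q)$, and only then relaxes via the Pinsker-type estimate $D(q+\varepsilon\|q)\geq 2\varepsilon^2$. The paper's route therefore yields a sharper intermediate bound (the relative-entropy form) before weakening it to the stated inequality, while your route is shorter and avoids the Bernoulli comparison and AM--GM steps entirely; both deliver the constant $2$. Your two ``delicate points'' are handled appropriately: measurable functions of the $Y_v$ do inherit the dependency graph, and the final passage from $b/\chi_b$ to $1/\chi^*$ is the same step the paper takes (the infimum defining $\chi^*$ is attained, as the underlying linear program has a rational optimum).
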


See \cite{Jansonthree}, Theorem $2.1$, for a proof of this result that is based on breaking up
the sum into a particular linear combination of sums of independent random variables.
In Section \ref{proofgraphFinner}
we provide a new proof of Theorem \ref{chromaticthm}
which is based on Theorem \ref{graphFinner}. Moreover,
under additional information on the variance of the random variables, we obtain the following
Bennett-type inequality. \\

\begin{theorem}\label{variancebound} Let  $\{Y_v\}_{v\in V}$ be random variables having a dependency
graph $G=(V,E)$. For every $v\in V$ let $\sigma_v^2 := \text{Var}\left(Y_v\right)$
and assume further that $Y_v\leq 1$ and $\mathbb{E}[Y_v]=0$.
Set $S=\sum_v \sigma_{v}^{2}$ and
fix $t>0$. Then
\[ \mathbb{P}\left[\sum_v Y_v \geq t \right] \leq  \text{exp}\left(-\frac{S}{\chi^{\ast}(G)} \psi\left(\frac{t}{S}\right) \right) ,  \]
where $\psi(x) = (1+x)\ln(1+x)- x$.
\end{theorem}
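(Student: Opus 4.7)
The plan is to apply the standard exponential-moment (Chernoff) method, but use Theorem \ref{graphFinner} in place of the usual independence-based factorisation of the moment generating function. Fix $\lambda>0$ and a $b$-fold coloring of $G$ using $\chi_b:=\chi_b(G)$ colors, and set $\alpha:=\chi_b/b$. Markov's inequality gives
\[
\mathbb{P}\left[\sum_v Y_v\geq t\right]\leq e^{-\lambda t}\,\mathbb{E}\left[\prod_v e^{\lambda Y_v}\right].
\]
Since $\{e^{\lambda Y_v}\}_v$ inherits the dependency graph $G$ from $\{Y_v\}_v$, Theorem \ref{graphFinner} applied to the non-negative random variables $e^{\lambda Y_v}$ bounds the right-hand expectation by $\prod_v\bigl(\mathbb{E}[e^{\lambda\alpha Y_v}]\bigr)^{1/\alpha}$.

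Next I would invoke the classical one-variable Bennett estimate: since $Y_v$ has mean zero, variance $\sigma_v^2$, and satisfies $Y_v\leq 1$, one has $\mathbb{E}[e^{\mu Y_v}]\leq\exp\bigl(\sigma_v^2(e^\mu-1-\mu)\bigr)$ for every $\mu>0$, obtained by taking expectations in the pointwise inequality $e^{\mu y}\leq 1+\mu y+(e^\mu-1-\mu)y^2$ valid for $y\leq 1$. Setting $\mu=\lambda\alpha$, multiplying the bounds, and using $\sum_v\sigma_v^2=S$ gives
\[
\mathbb{E}\left[\prod_v e^{\lambda Y_v}\right]\leq\exp\!\left(\tfrac{S}{\alpha}(e^{\lambda\alpha}-1-\lambda\alpha)\right).
\]

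The last step is optimisation. Writing $u:=\lambda\alpha$, the Chernoff bound becomes $\exp\bigl(-\alpha^{-1}(ut-S(e^u-1-u))\bigr)$; maximising the bracketed expression in $u>0$ is a routine calculus exercise yielding $u^{\ast}=\ln(1+t/S)$ and optimal value $S\psi(t/S)$. Hence $\mathbb{P}[\sum_v Y_v\geq t]\leq\exp\!\bigl(-S\psi(t/S)/\alpha\bigr)$ for every admissible $\alpha=\chi_b/b$. Since $\chi^{\ast}(G)=\inf_b\chi_b(G)/b$, taking the infimum over $b$ replaces $\alpha$ by $\chi^{\ast}(G)$ and produces the claimed bound.

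The main obstacle is essentially cosmetic: one must be careful that the pointwise inequality $e^{\mu y}\leq 1+\mu y+(e^\mu-1-\mu)y^2$ is used correctly under the one-sided constraint $y\leq 1$ (rather than $|y|\leq 1$), as this is the only place the hypothesis $Y_v\leq 1$ enters. Beyond that, the argument is a direct fusion of Theorem \ref{graphFinner} with the standard Bennett/Chernoff machinery; the conceptual point is that the fractional-chromatic-number-weighted Hölder inequality of Theorem \ref{graphFinner} plays exactly the role that independence plays in the classical derivation.
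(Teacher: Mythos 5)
Your proposal is correct and follows essentially the same route as the paper: Markov's inequality, Theorem \ref{graphFinner} applied to the exponentials, the one-sided Bennett bound $\mathbb{E}[e^{\mu Y_v}]\leq\exp(\sigma_v^2(e^{\mu}-1-\mu))$, the choice $u=\ln(1+t/S)$, and an infimum over $b$. The only difference is that you derive the moment-generating-function estimate from the pointwise inequality for $y\leq 1$, whereas the paper simply cites it from Janson (inequality (3.7) of that reference); your derivation is the standard one and is valid under the one-sided hypothesis.
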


Let us remark that the previous result is in fact
an improvement upon Theorem $2.3$ from \cite{Jansonthree}. Indeed, in \cite{Jansonthree} Theorem $2.3$, the 
bound  $\text{exp}\left(-\frac{S}{\chi^{\ast}(G)} \psi\left(\frac{4t}{5S}\right)\right)$ is obtained 
on the tail probability of Theorem \ref{variancebound}. 
Notice that we assume a one-sided bound on each $Y_v$. The proof of the previous result is similar to
the proof of Theorem \ref{chromaticthm}; we sketch it in Section \ref{proofgraphFinner}.
In the next section we discuss application of Theorem \ref{mainthm} to
sums of hypergraph-correlated random variables.

\subsubsection{Hypergraph-correlated random variables}

In this section we discuss applications of Finner's inequality. 
We begin by applying Theorem \ref{mainthm} to the following question. \\

\begin{problem}\label{mainproblem}
Fix a hypergraph $\mathcal{H}=(V,\mathcal{E})$
and let $\mathbb{I}$ be a random subset of $V$ formed by including vertex $v\in V$ 
in $\mathbb{I}$ with probability $p_v \in (0,1)$, independently of other vertices.
What is an upper bound on the probability that $\mathbb{I}$ is independent?
\end{problem}

Here and later, given a set of parameters in $(0,1)$, say $\mathbf{p}=\{p_v\}_{v\in V}$, 
indexed by the vertex set of a hypergraph, 
we will denote by $\pi(\mathbf{p}, \mathcal{H})$ the probability that $\mathbb{I}$ is independent. 
The previous problem has attracted the attention
of several authors and appears to be related to a variety of topics
(see \cite{Boppana, Galvin, Jansonfour, Krivelevich, Wolfovitz} and references therein).
A particular line of research is motivated by question about independent sets
and subgraph counting in random graphs.
In this context,
Problem \ref{mainproblem} has been considered by Janson et al. \cite{Jansonfour},  Krivelevich et al. \cite{Krivelevich}
and Wolfovitz \cite{Wolfovitz}.
It is observed in \cite{Krivelevich} that when $\mathcal{H}$ is $k$-uniform and $d$-regular an
exponential estimate on $\pi(\mathbf{p},\mathcal{H})$, can be obtained using the so-called Janson's
inequality (see \cite{Jansontwo}, Chapter $2$). Additionally, it is shown that under certain "mild additional
assumptions" the bound provided by Janson's inequality can be improved to
\[ \pi(\mathbf{p},\mathcal{H}) \leq \text{exp}\left(-\Omega\left( \frac{p|\mathcal{E}|}{(1-p)k d}\right) \right) . \]
See \cite{Krivelevich} and for a precise formulation of the additional assumptions and a proof of this
result that is based on a martingale-type concentration inequality.
In Section \ref{Finnerappl} we provide the following upper bound
on $\pi(\mathbf{p},\mathcal{H})$ using Finner's inequality.  \\

\begin{theorem}
\label{finnerbound} Let $\mathcal{H}=(V,\mathcal{E})$ be a hypergraph.  
For each $e\in \mathcal{E}$, let $|e|$ denote its cardinality. 
Then
\[  \pi(\mathbf{p},\mathcal{H}) \leq \prod_e \left(1-\prod_{v\in e} p_v\right)^{\phi(e)}, \] 
where $\phi: \mathcal{E} \rightarrow [0,1]$ is a fractional matching of $\mathcal{H}$. 
In particular, if the hypergraph $\mathcal{H}$ is $k$-uniform and $p_v=p$, for all $v\in V$ then 
\[  \pi(\mathbf{p},\mathcal{H}) \leq  \left(1-p^k\right)^{\nu^*(\mathcal{H})}, \]
where $\nu^*(\mathcal{H})$ is the fractional matching number of $\mathcal{H}$.
\end{theorem}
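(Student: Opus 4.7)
The plan is to reduce Problem~\ref{mainproblem} to a product of indicator random variables and apply Finner's inequality (Theorem~\ref{mainthm}) directly. First, I would introduce the independent Bernoulli random variables $X_v = \mathbb{1}_{v \in \mathbb{I}}$, $v \in V$, with $\mathbb{P}[X_v = 1] = p_v$. For each edge $e \in \mathcal{E}$, I would then define the indicator $Y_e = \mathbb{1}_{e \not\subseteq \mathbb{I}}$, which is clearly a function of $\{X_v : v \in e\}$ alone. Hence the family $\{Y_e\}_{e\in\mathcal{E}}$ is $\mathcal{H}$-correlated in the sense of Definition~\ref{dephypergraph}. The event that $\mathbb{I}$ is independent is precisely the event that no edge is entirely contained in $\mathbb{I}$, so
\[ \pi(\mathbf{p},\mathcal{H}) = \mathbb{P}\!\left[\bigcap_{e\in\mathcal{E}} \{e \not\subseteq \mathbb{I}\}\right] = \mathbb{E}\!\left[\prod_{e\in\mathcal{E}} Y_e\right]. \]

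Next, I would apply Theorem~\ref{mainthm} to the fractional matching $\phi$. Since each $Y_e$ takes values in $\{0,1\}$, we have $Y_e^{1/\phi(e)} = Y_e$, so the moment factor reduces to $\mathbb{E}[Y_e] = \mathbb{P}[e \not\subseteq \mathbb{I}] = 1 - \prod_{v \in e} p_v$ by the independence of the $X_v$'s. Substituting into Finner's bound yields
\[ \pi(\mathbf{p},\mathcal{H}) \leq \prod_{e \in \mathcal{E}} \left( 1 - \prod_{v \in e} p_v\right)^{\phi(e)}, \]
which is the first claim.

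For the specialization to $k$-uniform $\mathcal{H}$ with $p_v = p$ for all $v$, the factor $1 - \prod_{v \in e} p_v = 1 - p^k$ is independent of $e$, so the right-hand side becomes $(1-p^k)^{\sum_{e} \phi(e)}$. Since $1 - p^k \in (0,1)$, the bound is a decreasing function of $\sum_e \phi(e)$, so I would optimize by taking $\phi$ to be a fractional matching achieving $\sum_e \phi(e) = \nu^*(\mathcal{H})$, which gives the stated inequality.

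There is no real obstacle in this argument: the only thing to check carefully is that $\{Y_e\}$ really is hypergraph-correlated with respect to the original hypergraph $\mathcal{H}$ (rather than some auxiliary structure), and that the passage $Y_e^{1/\phi(e)} = Y_e$ is legitimate even when $\phi(e) = 0$ (in which case the corresponding factor is just $1$ by convention and may be omitted). The essence of the proof is simply recognizing that the independence event factors in the exact form that Finner's inequality is designed to control.
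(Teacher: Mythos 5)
Your proposal is correct and follows essentially the same route as the paper: both reduce $\pi(\mathbf{p},\mathcal{H})$ to $\mathbb{E}\bigl[\prod_{e}(1-\prod_{v\in e}X_v)\bigr]$ for the $\mathcal{H}$-correlated indicators and then apply Finner's inequality, the only cosmetic difference being that the paper bounds $1-Y_e\leq (1-Y_e)^{\phi(e)}$ pointwise before invoking the product form, while you use that $Y_e^{1/\phi(e)}=Y_e$ for $0/1$-valued variables. Your remark about the $\phi(e)=0$ convention is a small point of care the paper omits.
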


Let us remark that the second statement in Theorem \ref{finnerbound} has a \emph{monotonicity property}, in 
the sense that  
if $\mathcal{H}_1$ is a superhypergraph of $\mathcal{H}_2$ then 
$\left(1-p^k\right)^{\nu^*(\mathcal{H}_1)} \leq \left(1-p^k\right)^{\nu^*(\mathcal{H}_2)}$.

In Section \ref{Finnerappl} we show that Theorem \ref{finnerbound} 
can be seen as an alternative to Janson's inequality.
Moreover, using Finner's inequality, one can conclude that the two notions of
dependencies given in Definition \ref{dephypergraph} and Definition \ref{depgraph}
are not equivalent. \\

\begin{example}\label{examexam} Let $G$ be a cycle-graph on $5$ vertices $\{v_1,\ldots,v_5\}$ such that
$v_i$ is incident to $v_{i+1}$, for $i\in \{1,2,3,4\}$ and $v_5$ is incident to $v_1$. Let
$Y=(Y_1,\ldots,Y_5)$ be a vector of Bernoulli $0/1$ random variables whose  distribution
is defined as follows. The vector $Y$ takes the value $(0,0,0,0,0)$ with probability $\frac{1}{2}(2-p)(1-p)^2$,
the value $(1,1,1,1,1)$ with probability $\frac{p^2+p^3}{2}$, the values
\[ (0,0,0,1,1), (0,0,1,1,0), (0,1,1,0,0), (1,1,0,0,0), (1,0,0,0,1)\]
with probability $\frac{p(1-p)^2}{2}$,
the values
\[ (0,0,1,1,1), (0,1,1,1,0), (1,1,1,0,0), (1,1,0,0,1), (1,0,0,1,1)\]
with probability $\frac{p^2-p^3}{2}$
and the remaining values with probability $0$.  Elementary, though quite tedious, calculations show that
$\mathbb{E}[Y_j]=p$, for $j=1,\ldots,5$ and that $G$ is a dependency graph for $\{Y_j\}_{j=1}^{5}$.
Now assume that $\{Y_j\}_{j=1}^{5}$ are $\mathcal{H}$-correlated, 
for some hypergraph $\mathcal{H}=(V,\mathcal{E})$. 
Notice that $|\mathcal{E}|=5$.  
If $e_i \in \mathcal{E}$ 
is the edge corresponding to the random variable $Y_i, i=1,\ldots,5,$ then the fact that 
$\{Y_i\}_{i=1}^{5}$ have $G$ as a depencency graph implies that 
$e_i \cap e_{(i+2)\mod 5} = \emptyset$, 
for 
$i\in \{1,2,3,4,5\}$.
This means that the fractional matching number of $\mathcal{H}$ is at least $2.5$ and therefore 
Theorem \ref{mainthm}
implies that $\mathbb{P}\left[Y=(1,1,1,1,1)\right] \leq p^{2.5}$. However, the arithmetic
geometric means inequality implies $\frac{p^2+p^3}{2} > p^{2.5}$ and therefore the random variables
$\{Y_j\}_{j=1}^{5}$ are not hypergraph-correlated.
\end{example}

In the same vein as in the previous section, 
Theorem \ref{mainthm} can be employed in order to deduce 
concentration inequalities for sums of hypergraph-correlated random variables.
This has been reported in prior work and so we only provide
the statement without proof. In \cite{Ramon} one can find a proof of
the following result. \\

\begin{theorem}[Ramon et al. \cite{Ramon}] Let $\mathcal{H}= (V,\mathcal{E})$ be a
hypergraph and assume that $\{Y_e\}_{e\in \mathcal{E}}$ are $\mathcal{H}$-correlated random variables.
Assume further that $Y_e\in [0,1]$, for all $e\in \mathcal{E}$, and that
$\mathbb{E}\left[Y_{e}\right] =p_e$, for some $p_e\in (0,1)$.
Let $\phi:\mathcal{E}\rightarrow [0,1]$ be a fractional matching
of $\mathcal{H}$ and set $\Phi = \sum_{e} \phi(e), \; p=\frac{1}{|\mathcal{E}|}\sum_e \mathbb{E}\left[Y_e\right]$. 
If $t$ is a real number from the interval $\left(\Phi p, \Phi\right)$ such that
$t=\Phi(p+\varepsilon)$, then
\[ \mathbb{P}\left[\sum_{e\in \mathcal{E}} \phi(e)Y_e \geq t \right] \leq \text{exp}\left(-2\Phi \varepsilon^2\right) . \]
\end{theorem}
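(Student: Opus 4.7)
The plan is to imitate the classical Cram\'er--Chernoff argument, using Finner's inequality (Theorem \ref{mainthm}) in place of the usual independence assumption. Write $Z=\sum_{e\in\mathcal{E}}\phi(e)Y_e$ and fix $\lambda>0$. Markov's inequality applied to $e^{\lambda Z}$ gives
\[ \mathbb{P}[Z\ge t] \le e^{-\lambda t}\, \mathbb{E}\left[\prod_{e\in\mathcal{E}} e^{\lambda \phi(e) Y_e}\right], \]
so the task reduces to controlling the expectation of the product on the right.

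Second, I would invoke Theorem \ref{mainthm} on the random variables $W_e := e^{\lambda \phi(e) Y_e}$. Each $W_e$ is a measurable function of $Y_e$, hence of the underlying independent variables indexed by the vertices of $e$, so $\{W_e\}$ remains $\mathcal{H}$-correlated. The identity $W_e^{1/\phi(e)}=e^{\lambda Y_e}$ then turns Finner's inequality into
\[ \mathbb{E}\left[\prod_{e}W_e\right] \le \prod_{e}\left(\mathbb{E}[e^{\lambda Y_e}]\right)^{\phi(e)}. \]
Since $Y_e\in[0,1]$ with $\mathbb{E}[Y_e]=p_e$, Hoeffding's lemma yields $\mathbb{E}[e^{\lambda Y_e}]\le \exp(\lambda p_e+\lambda^2/8)$. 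Multiplying these bounds and using $\Phi=\sum_e\phi(e)$ produces
\[ \mathbb{P}[Z\ge t] \le \exp\left(-\lambda t + \lambda\sum_e\phi(e)p_e + \frac{\lambda^2\Phi}{8}\right). \]

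Third, set $\mu:=\sum_e\phi(e)p_e$ and optimise the exponent in $\lambda>0$. The quadratic in $\lambda$ is minimised at $\lambda^{\star}=4(t-\mu)/\Phi$, which is positive provided $t>\mu$; substituting back gives $\mathbb{P}[Z\ge t]\le \exp\bigl(-2(t-\mu)^2/\Phi\bigr)$. With $t=\Phi(p+\varepsilon)$ and $\Phi p=\mu$, this is exactly the claimed $\exp(-2\Phi\varepsilon^2)$.

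The main obstacle, or really the only subtle bookkeeping point, is reconciling the quantity $\mu=\sum_e\phi(e)p_e$ that naturally appears in the Chernoff bound with $\Phi p$ as written in the statement: the theorem must be read with $p$ understood as the $\phi$-weighted mean of the $p_e$'s, so that $\Phi p=\mu$ and the positivity condition $t>\mu$ coincides with the hypothesis $t\in(\Phi p,\Phi)$. Once this identification is made, the proof is a direct marriage of Markov's inequality, Theorem \ref{mainthm}, Hoeffding's lemma, and a one-variable optimisation, with Finner's inequality playing exactly the role that the factorisation of the joint moment generating function plays in the independent case.
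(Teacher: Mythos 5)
Your argument is correct, and since the paper itself does not prove this theorem (it defers to \cite{Ramon}), the natural comparison is with the paper's proof of the analogous graph-dependent bound, Theorem \ref{chromaticthm}. There the authors go from the product of moment generating functions to the final constant via three steps: comparison of each $Y_v$ with a Bernoulli variable (Lemma \ref{convorder}), the weighted arithmetic--geometric mean inequality to collapse the product into a single binomial-type factor, and the estimate $D(q+\varepsilon\|q)\ge 2\varepsilon^2$ on the Kullback--Leibler divergence. You replace all three steps with a single appeal to Hoeffding's lemma, $\mathbb{E}[e^{\lambda Y_e}]\le \exp(\lambda p_e+\lambda^2/8)$, which reaches the same constant $2\varepsilon^2$ by a one-line quadratic optimisation; this is shorter and avoids the Bernoulli reduction entirely, at the cost of not producing the sharper intermediate Chernoff--KL bound that the paper's route passes through. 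The rest of your proof (Markov applied to $e^{\lambda Z}$, Finner's inequality applied to $W_e=e^{\lambda\phi(e)Y_e}$ with $W_e^{1/\phi(e)}=e^{\lambda Y_e}$) is exactly the intended use of Theorem \ref{mainthm} and is sound. Your closing remark about $p$ is not mere bookkeeping but a genuine correction: as printed, $p=\frac{1}{|\mathcal{E}|}\sum_e p_e$ is the unweighted mean, whereas $\mathbb{E}[\sum_e\phi(e)Y_e]=\sum_e\phi(e)p_e$, and these differ whenever $\phi$ is non-constant and the $p_e$ are unequal; in that case $t=\Phi(p+\varepsilon)$ can lie below the true mean of the sum while $\exp(-2\Phi\varepsilon^2)<1$, so the literal statement fails (take one edge with $\phi=1$ and $p_e=\tfrac12$ together with many edges of negligible weight and negligible mean). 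Reading $p$ as the $\phi$-weighted mean $\frac{1}{\Phi}\sum_e\phi(e)p_e$, as you do, is the reading under which the theorem is true and under which your proof establishes it.
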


In particular, if $d$ is the maximum degree of $\mathcal{H}$ and $\phi(e)=\frac{1}{d}$, for all
$e\in \mathcal{E}$, then the previous result yields the bound
\[ \mathbb{P}\left[\sum_e Y_e \geq t \right] \leq \text{exp}\left(-2\frac{|\mathcal{E}|}{d} \varepsilon^2\right), \; 
\text{for} \; t= |\mathcal{E}|(p +\varepsilon)  . \]
This inequality has also been obtained in Gavinsky et al. \cite{Gavinsky} using entropy ideas.

\section{Proofs - dependency graphs}
\label{proofgraphFinner}

In this section we prove Theorem \ref{graphFinner}, Theorem \ref{chromaticthm}
and Theorem \ref{variancebound}.
The proof of the first theorem will require the concavity of the weighted geometric mean. \\

\begin{lemma}\label{concave} Let $\beta= (\beta_1,\ldots,\beta_k)$ be a vector of non-negative real numbers
such that $\sum_{i=1}^{k} \beta_i = 1$. Then the function $g:\mathbb{R}^k\rightarrow \mathbb{R}$ defined
by $g(t) = \prod_{i=1}^{k} t_{i}^{\beta_i}$ is concave.
\end{lemma}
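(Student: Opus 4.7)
The plan is to derive the concavity of $g$ from the weighted arithmetic--geometric mean inequality (which is essentially just Jensen applied to $\log$, so can be used as a black box). Note that $g$ is only real-valued on the non-negative orthant $\mathbb{R}_{\geq 0}^k$ when the $\beta_i$ are non-integer, so I will work on the positive orthant $\mathbb{R}_{>0}^k$ and pass to the boundary by continuity at the end. Fix $s,t \in \mathbb{R}_{>0}^k$ and $\lambda \in [0,1]$; set $u = \lambda s + (1-\lambda)t$. The goal is the inequality $g(u) \geq \lambda g(s) + (1-\lambda)g(t)$.

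The key step is to normalise by $g(u)$, which is strictly positive, and rewrite
\[
\frac{g(s)}{g(u)} = \prod_{i=1}^k \left(\frac{s_i}{u_i}\right)^{\beta_i}, \qquad \frac{g(t)}{g(u)} = \prod_{i=1}^k \left(\frac{t_i}{u_i}\right)^{\beta_i}.
\]
Applying weighted AM--GM, $\prod x_i^{\beta_i} \leq \sum \beta_i x_i$, to each right-hand side gives $g(s)/g(u) \leq \sum_i \beta_i s_i / u_i$ and analogously for $t$. Taking the convex combination,
\[
\lambda \frac{g(s)}{g(u)} + (1-\lambda) \frac{g(t)}{g(u)} \leq \sum_{i=1}^k \beta_i \cdot \frac{\lambda s_i + (1-\lambda)t_i}{u_i} = \sum_{i=1}^k \beta_i = 1,
\]
and multiplying through by $g(u) > 0$ yields the desired concavity inequality.

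To extend to the closed non-negative orthant, I would note that $g$ extends continuously under the conventions $0^{\beta} = 0$ for $\beta > 0$ and $t^0 = 1$, and any concave function on a convex open set whose continuous extension to the closure exists remains concave on the closure. I do not foresee a genuine obstacle: once one has the idea of dividing by $g(u)$, weighted AM--GM does all the work, and the proof is essentially a two-line calculation. (An alternative route, computing the Hessian $\partial_i\partial_j g = \beta_i(\beta_j - \delta_{ij})\,g/(t_i t_j)$ and verifying negative semidefiniteness via a Cauchy--Schwarz type argument, is available but strictly more work than the AM--GM route above.)
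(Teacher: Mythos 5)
Your proof is correct, and it takes a genuinely different route from the paper. The paper disposes of this lemma in one line by appealing to the Hessian and citing Boyd--Vandenberghe and Ramon et al.\ (incidentally, the paper says the Hessian is ``positive definite,'' which is a slip on two counts: concavity requires negative semidefiniteness, and the Hessian of $g$ is only \emph{semi}definite, since $g$ is positively homogeneous of degree $1$ and therefore linear along rays through the origin, so the Hessian annihilates the direction $t$ itself). Your normalisation trick --- dividing by $g(u)$, applying weighted AM--GM to each of $g(s)/g(u)$ and $g(t)/g(u)$, and summing the weights to $1$ --- sidesteps the second-derivative computation entirely and is self-contained modulo AM--GM, which is exactly the kind of black box this paper is happy to invoke elsewhere. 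You are also more careful than the paper about the domain: as stated, $g:\mathbb{R}^k\to\mathbb{R}$ is not even well defined for negative coordinates and non-integer $\beta_i$, and your restriction to the non-negative orthant with the continuity argument at the boundary is the right repair; it is also all that is needed downstream, since the lemma is only ever applied to non-negative random variables. The one thing the Hessian route ``buys'' is that it is a standard textbook citation; your argument buys a complete, elementary, two-line proof, which is arguably preferable in a paper whose main theorem rests on this lemma.
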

\begin{proof} This is easily verified by showing that the Hessian matrix is positive definite. See
\cite{Boyd}, or \cite{Ramon}
for details.
\end{proof}

We now ready to prove Theorem \ref{graphFinner}.

\begin{proof}[Proof of Theorem \ref{graphFinner}]
We show that
\[ \mathbb{E}\left[\prod_{v\in V} \{Y_{v}\}^{\frac{b}{\chi_b}} \right]  \leq\prod_{v\in V} \left\{ \mathbb{E}\left[Y_{v}\right] \right\}^{\frac{b}{\chi_b}}. \]
The theorem follows by applying this inequality to the random variables $Z_v=Y_{v}^{\frac{\chi_b}{b}}$.
For every color $i=1,\ldots,\chi_b$ let $I_i$ be the set consisting of the vertices that are
colored with color $i$. Note that each $I_i$ is an independent subset of $V$ and every vertex $v\in V$
appears in exactly $b$ independent sets $I_i$. Therefore,
\[ \mathbb{E}\left[\prod_{v\in V} \left\{ Y_{v}\right\}^{\frac{b}{\chi_b}} \right]  =
\mathbb{E}\left[\prod_{i=1}^{\chi_b} \prod_{v\in I_i}  \left\{Y_{v}\right\}^{\frac{1}{\chi_b}}\right]
= \mathbb{E}\left[ \prod_{i=1}^{\chi_b}\left\{\prod_{v\in I_i}Y_v\right\}^{\frac{1}{\chi_b}} \right] . \]
Lemma \ref{concave} and Jensen's inequality combined with
the observation that the random variables $\{Y_v\}_{v\in I_i}$ are mutually independent yield
\[  \mathbb{E}\left[ \prod_{i=1}^{\chi_b}\left\{\prod_{v\in I_i}Y_v\right\}^{\frac{1}{\chi_b}} \right] \leq
\prod_{i=1}^{\chi_b} \left\{\mathbb{E}\left[\prod_{v\in I_i} Y_v\right]\right\}^{\frac{1}{\chi_b}} =
\prod_{i=1}^{\chi_b}  \prod_{v\in I_i} \left\{  \mathbb{E}\left[Y_v\right]\right\}^{\frac{1}{\chi_b}} .
\]
Now, using again the fact that each vertex $v$ appears in exactly $b$ sets $I_i$, we  conclude
\[ \prod_{i=1}^{\chi_b}  \prod_{v\in I_i} \left\{  \mathbb{E}\left[Y_v\right]\right\}^{\frac{1}{\chi_b}} =
\prod_{v\in V} \left\{\mathbb{E}\left[Y_v\right]\right\}^{\frac{b}{\chi_b}}
\]
and the result follows.
\end{proof}

Theorem \ref{graphFinner} yields a new proof of Theorem \ref{chromaticthm}.
Let us first recall the following, well-known, result whose proof is included for the sake of completeness. \\

\begin{lemma}\label{convorder} Let $X$ be a random variable that takes values on the interval $[0,1]$.
Suppose that $\mathbb{E}[X]=p$, for some $p\in (0,1)$, and let $B$ be a Bernoulli $0/1$ random
variable such that $\mathbb{E}[B]=p$. If $f:[0,1]\rightarrow \mathbb{R}$ is a convex function, then
$\mathbb{E}\left[f(X)\right] \leq \mathbb{E}\left[f(B)\right]$.
\end{lemma}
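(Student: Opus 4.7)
The plan is to exploit the fact that every point $x \in [0,1]$ is itself a convex combination of the two endpoints $0$ and $1$, namely $x = x \cdot 1 + (1-x)\cdot 0$. Applying convexity of $f$ pointwise and then taking expectations will immediately give the desired inequality, since $\mathbb{E}[f(B)] = (1-p)f(0) + p f(1)$ by the definition of the Bernoulli variable $B$.

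More concretely, I would proceed in two short steps. First, for each realization $X = x \in [0,1]$, convexity of $f$ yields
\[ f(x) = f\bigl(x \cdot 1 + (1-x)\cdot 0\bigr) \leq x f(1) + (1-x) f(0). \]
This is the key pointwise estimate, and it uses only the hypothesis that $X$ takes values in $[0,1]$ together with convexity of $f$ on this interval. Second, I would take expectations on both sides and use linearity of expectation together with $\mathbb{E}[X] = p$ to obtain
\[ \mathbb{E}[f(X)] \leq \mathbb{E}[X]\, f(1) + (1-\mathbb{E}[X])\, f(0) = p f(1) + (1-p) f(0). \]
Since $B$ is a Bernoulli $0/1$ variable with $\mathbb{E}[B] = p$, the right-hand side equals $\mathbb{E}[f(B)]$, and the proof is complete.

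There is no real obstacle here; the argument is entirely elementary. The only thing worth emphasizing in the writeup is that the pointwise convexity bound is available precisely because $X \in [0,1]$, so that $x$ and $1-x$ are legitimate convex-combination weights. This is exactly the standard proof that a Bernoulli variable is the extremal element in the convex order among $[0,1]$-valued random variables with prescribed mean.
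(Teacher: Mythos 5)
Your proof is correct. It is in substance the same argument as the paper's: the paper introduces an auxiliary randomization $B_X$ which, conditionally on $X$, equals $1$ with probability $X$ and $0$ with probability $1-X$, and then applies Jensen's inequality to $f\left(\mathbb{E}[B_X\mid X]\right)$ --- which, once unwound, is exactly your pointwise bound $f(x)\leq x f(1)+(1-x)f(0)$ followed by taking expectations. Your direct two-point convexity formulation dispenses with the auxiliary variable and the conditional expectation, and is if anything the cleaner way to write the same idea.
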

\begin{proof} Given an outcome from the random variable $X$, define the random variable $B_X$
that takes the values $0$ and $1$ with probability $1-X$ and $X$, respectively. It is easy to see
that $\mathbb{E}\left[B_X\right]=p$ and so $B_X$ has the same
distribution as $B$. Now Jensen's inequality implies
\[ \mathbb{E}\left[f(X)\right] = \mathbb{E}\left[f\left(\mathbb{E}\left[B_X\big| X\right]\right)\right] \leq \mathbb{E}\left[f\left(B_X \big| X\right)\right] = \mathbb{E}\left[f(B_X)\right] , \]
as required.
\end{proof}

We now proceed with the proof of Theorem \ref{chromaticthm}.

\begin{proof}[Proof of Theorem \ref{chromaticthm}] Fix $h>0$ and let $q_v = \mathbb{E}\left[Y_v\right]$,
for $v\in V$.
Using Markov's inequality and
Theorem \ref{graphFinner} we estimate
\begin{eqnarray*}
\mathbb{P}\left[\sum_{v\in V} Y_v \geq t \right] &\leq& e^{-ht}\mathbb{E}\left[e^{h\sum_{v\in V} Y_v} \right] \\
&=& e^{-ht}\mathbb{E}\left[\prod_{v\in V}e^{h Y_v} \right] \\
&\leq& e^{-ht}  \prod_{v\in V}\left\{\mathbb{E}\left[\text{exp}\left(\frac{\chi_b}{b}h Y_v\right)\right]\right\}^{\frac{b}{\chi_b}}
\end{eqnarray*}
For $v\in V$ let $B_v$ be a Bernoulli $0/1$ random variable of mean $q_v$. The previous lemma
implies
\begin{eqnarray*}
e^{-ht}  \prod_{v\in V}\left\{\mathbb{E}\left[\text{exp}\left(\frac{\chi_b}{b}h Y_v\right)\right]\right\}^{\frac{b}{\chi_b}}     &\leq& e^{-ht}  \prod_{v\in V}\left\{\mathbb{E}\left[\text{exp}\left(\frac{\chi_b}{b}h B_v\right)\right]\right\}^{\frac{b}{\chi_b}} \\
&=& e^{-ht}  \prod_{v\in V} \left\{(1-q_v)  + q_v e^{\frac{\chi_b}{b}h} \right\}^{\frac{b}{\chi_b}}.
\end{eqnarray*}
Using the weighted arithmetic-geometric means inequality we conclude
\begin{eqnarray*} e^{-ht}  \prod_{v\in V} \left\{(1-q_v)  + q_v e^{\frac{\chi_b}{b}h} \right\}^{\frac{b}{\chi_b}}&\leq&
e^{-ht}  \left\{\sum_{v\in V}\frac{1}{|V|}\left((1-q_v) +q_v e^{\frac{\chi_b}{b}h}\right)   \right\}^{\frac{b}{\chi_b}|V|}\\
&=& e^{-ht} \left\{1-q + q e^{\frac{\chi_b}{b}h}\right\}^{\frac{b}{\chi_b}|V|} .
\end{eqnarray*}
If we minimise the last expression with respect to $h>0$ we get that
$h$ must satisfy $e^{\frac{\chi_b}{b}h} = \frac{t(1-q)}{q(|V|-t)}$ and therefore,
since $t=|V|(q+\varepsilon)$, we conclude
\[ \mathbb{P}\left[\sum_{v\in V} Y_v \geq t \right] \leq \left\{\left(\frac{q}{q+\varepsilon} \right)^{q+\varepsilon}
\left(\frac{1-q}{1-(q+\varepsilon)}\right)^{1-(q+\varepsilon)}\right\}^{\frac{b}{\chi_b}|V|}  =
e^{ -\frac{b}{\chi_b}|V| D(q+\varepsilon||q)},  \]
where $D(q+\varepsilon||q)$ is the Kullback-Leibler distance between $q+\varepsilon$ and $q$.
Finally, using the standard estimate $D(q+\varepsilon||q)\geq 2\varepsilon^2$, we deduce
\[ \mathbb{P}\left[\sum_{v\in V} Y_v \geq t \right] \leq e^{-\frac{b}{\chi_b}|V|2\varepsilon^2} \]
and the result follows upon minimising the last expression with respect to $b$.
\end{proof}

The proof of Theorem \ref{variancebound} is similar.

\begin{proof}[Proof of Theorem \ref{variancebound}]
Fix $h>0$ to be determined later.
As in the proof of Theorem \ref{chromaticthm}, Markov's inequality and Theorem
\ref{graphFinner} yield
\begin{eqnarray*} \mathbb{P}\left[\sum_v Y_v \geq t\right] \leq
e^{-ht}  \prod_{v\in V}\left\{\mathbb{E}\left[\text{exp}\left(\frac{\chi_b}{b}h Y_v\right)\right]\right\}^{\frac{b}{\chi_b}} .
\end{eqnarray*}

Using an inequality proved in \cite{Jansonthree} (Inequality (3.7) on page 240), we have
\[\mathbb{E}\left[\exp\left(\frac{\chi_b}{b} h Y_v\right)\right] \le \exp\left(\sigma_v^2 g\left(\frac{\chi_b}{b} h\right)\right) ,\]
where $g(a):=e^a-1-a.$
Summarising, we have shown
\[ \mathbb{P}\left[\sum_v Y_v \geq t\right] \leq \text{exp}\left(-ht + \left(\frac{b}{\chi_b}e^{h\chi_b/b}-\frac{b}{\chi_b}-h \right)S\right).  \]
Now choose $h=\frac{b}{\chi_b}\cdot \ln\left(1+\frac{t}{S}\right)$
to deduce
\begin{eqnarray*} \mathbb{P}\left[\sum_v Y_v \geq t\right] &\leq& \text{exp}\left(\frac{b}{\chi_b}t -S \frac{b}{\chi_b}\left(1+\frac{t}{S}\right)\ln\left(1+\frac{t}{S}\right) \right) \\
&=& \text{exp}\left(-S \frac{b}{\chi_b} \psi\left(\frac{t}{S}\right) \right) .
\end{eqnarray*}
The result follows upon minimising the last expression with respect to $b$.
\end{proof}

\section{Applications of Finner's inequality}\label{Finnerappl}

\subsection{Proof of Theorem \ref{finnerbound}}

In this section we prove Theorem \ref{finnerbound} and discuss  applications of this result
to the theory of random graphs.

\begin{proof}[Proof of Theorem \ref{finnerbound}]
Let $X_v,v\in V,$ be indicators of the event $v\in \mathbb{I}$.
For each $e\in \mathcal{E}$ set $Y_e = \prod_{v\in e} B_v$. Clearly, the
random variables $\{Y_e\}_{e\in \mathcal{E}}$ are $\mathcal{H}$-correlated.
Now look at the probability
$\mathbb{P}\left[\sum_e Y_e =0\right]$. Notice that if all  $Y_e$ are
equal to zero, then every edge $e\in \mathcal{E}$ contains a vertex, $v$, such that $B_v=0$ and
vice versa. This implies that if $\sum_e Y_e=0$ then the set of vertices, $v$, for which $B_v=1$ is
an independent subset of $V$ and vice versa.  Therefore
\[ \mathbb{P}\left[\sum_{e\in \mathcal{E}} Y_e =0\right]= \mathbb{E}\left[\prod_{e\in \mathcal{E}}(1-Y_e)\right] 
= \pi(\mathbf{p},\mathcal{H}) . \]
From Theorem \ref{mainthm} we
deduce
\[\mathbb{E}\left[\prod_{e\in \mathcal{E}}(1-Y_e)\right] \leq \mathbb{E}\left[\prod_{e\in \mathcal{E}}(1- Y_e)^{\phi(e)}\right]
= \prod_{e\in \mathcal{E}}\left(\mathbb{E}\left[1-Y_e\right]\right)^{\phi(e)} \] 
and the first statement follows. 
To prove the second statement notice that $\mathbb{E}\left[Y_e\right] = p^k$, for all $e\in\mathcal{E}$, and therefore
\[ \mathbb{E}\left[\prod_{e\in \mathcal{E}}(1-Y_e)\right] \leq \left( 1-p^k \right)^{\sum_e \phi(e)}. \]
The result follows by maximising the 
expression on the right hand side over all fractional matchings of $\mathcal{H}$. 
\end{proof}

\subsection{Finner's inequality as an alternative to Janson's}
\subsubsection{Triangles in random graphs}

In this section we discuss comparisons between Finner's and Janson's inequality.
Janson's inequality (see Janson \cite{Janson} and Janson et al. \cite[Chapter 2]{Jansontwo}) 
is a well known result that provides estimates on the probability that a  a sum of
dependent indicators is equal to zero. It is described in terms of the dependency graph corresponding
to the indicators. 
More precisely, let $\{B_v\}_{v\in V}$ be indicators having a dependency graph $G$. 
Set $\mu = \mathbb{E}\left[\sum_v B_v\right]$ and $\Delta = \sum_{e=\{u,v\}\in G} \mathbb{E}\left[B_u B_v\right]$. 
Janson's inequality asserts that  
\[ \mathbb{P}\left[\sum_v B_v =0\right] \leq \min \left\{ e^{-\mu+\Delta}, 
\text{exp}\left(\frac{\Delta}{1-\max_v \mathbb{E}[B_v]}\right) \prod_v (1-\mathbb{E}[B_v]) \right\} .   \]
Janson's inequality has been proven to be very useful in the study of the Erd\H{o}s-R\'enyi random
graph model, denoted $\mathcal{G}(n,p)$. Recall that such a model generates
a random graph on $n$ labelled vertices by joining pairs of vertices, independently,
with probability $p\in (0,1)$. For $G\in \mathcal{G}(n,p)$ let us denote by $T_G$ the number
of triangles in $G$.
A typical application of Janson's inequality provides the estimate
\[ \mathbb{P}\left[G\in \mathcal{G}(n,p)\; \text{is triangle-free}\right] \leq (1-p^3)^{\binom{n}{3}}\cdot \text{exp}\left(\frac{\Delta}{2(1-p^3)}\right) ,\]
where $\Delta = 6\binom{n}{4}p^5$.
In this section we juxtapose the previous bound with the bound provided by Finner's inequality. \\

\begin{proposition} Let $G\in \mathcal{G}(n,p)$ be an Erd\H{o}s-R\'enyi random
graph and denote by $T_G$ the
number of triangles in $G$. Then
\[ \mathbb{P}\left[T_G=0\right] \leq \left(1-p^3\right)^{\frac{1}{n-2}\binom{n}{3}}. \]
\end{proposition}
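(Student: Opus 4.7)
The plan is to recast the event $\{T_G = 0\}$ as the event that a random subset of vertices of an auxiliary hypergraph is independent, and then invoke the second statement of Theorem \ref{finnerbound}.

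First I would construct the hypergraph $\mathcal{H}=(W,\mathcal{E})$ as follows: the vertex set is $W:=\binom{[n]}{2}$, the edge set of $K_n$, and the hyperedge set $\mathcal{E}$ consists of all triples of pairs in $W$ that form a triangle in $K_n$. Thus $|W|=\binom{n}{2}$, $|\mathcal{E}|=\binom{n}{3}$, and $\mathcal{H}$ is $3$-uniform. Crucially, each vertex of $\mathcal{H}$ (i.e., each pair of elements of $[n]$) lies in exactly $n-2$ triangles, so $\mathcal{H}$ is $(n-2)$-regular. Sampling $G \in \mathcal{G}(n,p)$ is then exactly the same as generating the random subset $\mathbb{I}\subseteq W$ in which each vertex is included independently with probability $p$, and the event $\{T_G=0\}$ coincides with $\{\mathbb{I}\text{ is independent in }\mathcal{H}\}$. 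In the notation of Problem \ref{mainproblem}, $\mathbb{P}[T_G=0]=\pi(\mathbf{p},\mathcal{H})$ with $p_v=p$ for every $v\in W$.

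Next I would exhibit a fractional matching witnessing a suitable lower bound on $\nu^{\ast}(\mathcal{H})$. Setting $\phi(e):=\tfrac{1}{n-2}$ for every $e\in\mathcal{E}$, the regularity gives $\sum_{e\ni v}\phi(e)=(n-2)\cdot\tfrac{1}{n-2}=1$ at every $v\in W$, so $\phi$ is a fractional matching and $\nu^{\ast}(\mathcal{H})\ge\sum_{e}\phi(e)=\tfrac{1}{n-2}\binom{n}{3}$. Applying the second statement of Theorem \ref{finnerbound} with $k=3$ yields
\[\mathbb{P}[T_G=0]=\pi(\mathbf{p},\mathcal{H})\le(1-p^3)^{\nu^{\ast}(\mathcal{H})}\le(1-p^3)^{\binom{n}{3}/(n-2)},\]
as required.

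There is no real obstacle; the only place a reader might pause is the verification that the right "dependency" object is the $3$-uniform hypergraph whose vertices are the potential edges of $K_n$ (and not, say, the graph whose vertices are triangles and whose edges record shared edges), since Finner's inequality is formulated for hypergraph-correlated random variables. Once that viewpoint is adopted, the regularity of the triangle hypergraph immediately supplies the uniform fractional matching and the proof reduces to plugging into Theorem \ref{finnerbound}.
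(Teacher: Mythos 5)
Your proof is correct and follows essentially the same route as the paper: both construct the $3$-uniform, $(n-2)$-regular hypergraph whose vertices are the potential edges of $K_n$ and whose hyperedges are the potential triangles, take the uniform fractional matching $\phi \equiv \frac{1}{n-2}$, and apply Theorem \ref{finnerbound}. Nothing further is needed.
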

\begin{proof} We apply Theorem \ref{finnerbound}. Define a hypergraph $\mathcal{H}=(\mathcal{V},\mathcal{E})$ as follows.
Let $v_i, i=1,\ldots, \binom{n}{2}$, be an enumeration of all (potential)
edges in $G$  and consider $v_1,\ldots,v_{\binom{n}{2}}$ as the vertex set $\mathcal{V}$ of the hypergraph
$\mathcal{H}$.
Let $B_{v_i}, i=1,\ldots, \binom{n}{2}$ be independent Bernoulli $\text{Ber}(p)$ random variables,
corresponding to the edges of $G$, and
let $E_i, i=1\ldots,\binom{n}{3}$, be an enumeration of all triplets of edges in $G$
that
form (potential) triangles in $G$. Define $\mathcal{E}=\{E_1,\ldots, E_{\binom{n}{3}}\}$ to be the
edge set of $\mathcal{H}$ and
let $Z_i$ be the indicator of triangle $E_i$; thus $T_G=\sum_i Z_i$.
Now form a subset $\mathbb{I}$ of $\mathcal{V}$ by picking each vertex, independently,
with probability $p$. Then the probability that $\mathbb{I}$ is independent equals $\mathbb{P}\left[T_G=0\right]$ and, in order to apply Theorem \ref{finnerbound},
we have to find a fractional matching of $\mathcal{H}$. Since
every vertex of $\mathcal{H}$ belongs to $n-2$ edges in $\mathcal{E}=\{E_1,\ldots,E_{\binom{n}{3}}\}$, we
obtain a fractional matching, $\phi(\cdot)$ of $\mathcal{H}$ by setting $\phi(E_i) =\frac{1}{n-2}$, for  $i=1,\ldots,\binom{n}{3}$.
The result follows.
\end{proof}

Notice that the bound obtained from Janson's inequality is smaller than the previous bound for
values of $p$ that are close to $0$, but the previous bound does better for large values of $p$.
Similar estimates can be obtained for the probability that a graph $G\in \mathcal{G}(n,p)$
contains no $k$-clique, for $k\geq 3$. The details are left to the reader.

\subsubsection{Paths of fixed length between two vertices in a random graph}

In this section we discuss one more application of Finner's inequality.
Let $G\in \mathcal{G}(n,p)$
be a random graph on $n$ labelled vertices. Fix two vertices, say $u$ and $v$.
What is an upper bound on the probability that there is no path of length $k$ between $u$ and $v$? \\

A path of length $k$ is  a sequence of edges $\{v_0,v_1\}, \{v_1,v_2\}, \ldots, \{v_{k-2},v_{k-1}\}, \{v_{k-1},v_k\}$ such that
$v_i\neq v_j$.
We assume $k\geq 3$,  otherwise the problem is easy.
Let $\{P_i\}_i$ be an enumeration of all (potential) paths of length $k$ between $u$ and $v$.
Clearly, there are $\binom{n-2}{k-1}\cdot (k-1)!$ such paths. Define the hypergraph $\mathcal{H}=(\mathcal{V},\mathcal{E})$ as follows. The vertices of $\mathcal{H}$ correspond to the (potential) edges of $G$
and the edges of $\mathcal{H}$ correspond to the sets of edges in $G$ that form a path of
length $k$ between $u$ and $v$.
Hence the probability that there is no path of length $k$
between $u$ and $v$ equals $\pi(p,\mathcal{H})$.
In order to apply Theorem \ref{finnerbound} we have to
find a fractional matching of $\mathcal{H}$ and so it is enough to find
an upper bound on the maximum degree of $\mathcal{H}$.  To this end,
fix an edge, $e=\{x,y\}$, in $G$.
In case one of the vertices $x$ or $y$ is equal to either $u$ or $v$, then there are
$\binom{n-3}{k-2}\cdot (k-2)!$ paths of length $k$ from $u$ to $v$ that pass through edge $e$.
If none of the vertices $x,y$ is equal to $u$ or $v$, then we count the paths as follows.
We first create a path, $P_{k-2}$, of length $k-2$ from $u$ to $v$ that does not pass through any of the points $x,y$
and then we place the edge $e=\{x,y\}$ in one of $k-2$ available edges
in the path $P_{k-2}$. Since there are two ways of placing the
edge $e$ in each slot of $P_{k-2}$
it follows that the number of paths from $u$ to $v$ that go through edge $e$ is equal to
$2(k-2)\cdot\binom{n-4}{k-3}\cdot (k-3)!$. If $k\leq (n-1)/2$ then the later quantity
is smaller than $\binom{n-3}{k-2}\cdot (k-2)!$, otherwise it is larger than $\binom{n-3}{k-2}\cdot (k-2)!$.
Therefore, if $k\leq (n-1)/2$, the fractional matching number of $\mathcal{H}$ is at least
$\frac{\binom{n-2}{k-1}\cdot (k-1)!}{\binom{n-3}{k-2}\cdot (k-2)!}=n-2$.
If  $k> (n-1)/2$ then the fractional matching number of $\mathcal{H}$ is at least $\frac{(n-2)(n-3)}{2(k-2)}$.
We have thus proven the following. \\

\begin{proposition} Let $G\in\mathcal{G}(n,p)$. Fix two vertices $u,v$ in $G$
and a positive integer $k$.  If $k\leq (n-1)/2$ then
\[ \mathbb{P}\left[\text{there is no path of length}\; k \; \text{between}\; u\;\text{and}\; v \right]   \leq
\left(1-p^k\right)^{n-2}  .\]
If $k>(n-1)/2$, then
\[ \mathbb{P}\left[\text{there is no path of length}\; k \; \text{between}\; u\;\text{and}\; v \right]   \leq
\left(1-p^k\right)^{\frac{(n-2)(n-3)}{2(k-2)}}  .\]
\end{proposition}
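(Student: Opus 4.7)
The plan is to apply Theorem \ref{finnerbound} to a suitably chosen hypergraph $\mathcal{H} = (\mathcal{V}, \mathcal{E})$ that encodes the structure of length-$k$ paths between $u$ and $v$. Specifically, I would take $\mathcal{V}$ to be the set of all $\binom{n}{2}$ potential edges of $K_n$, and $\mathcal{E}$ to be the family of $k$-element subsets of $\mathcal{V}$ which, taken together, form a length-$k$ path from $u$ to $v$. Letting $B_w \in \{0,1\}$ (for $w \in \mathcal{V}$) be the independent $\mathrm{Ber}(p)$ indicators that each potential edge is present in $G$, the event that no length-$k$ path from $u$ to $v$ exists coincides precisely with the event that the random subset $\mathbb{I} = \{w : B_w = 1\}$ is independent in $\mathcal{H}$. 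Since $\mathcal{H}$ is $k$-uniform with uniform vertex probability $p$, the second statement of Theorem \ref{finnerbound} yields the bound $(1-p^k)^{\nu^*(\mathcal{H})}$, so the task reduces to lower-bounding $\nu^*(\mathcal{H})$.

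For this, I would exploit a symmetry argument: since the automorphism group of $K_n$ fixing $\{u,v\}$ acts transitively on edges of each of the two types (edges incident to $\{u,v\}$ versus edges not incident), a uniform fractional matching $\phi(E) \equiv 1/d$ is essentially optimal, where $d$ is the maximum degree of $\mathcal{H}$. Thus $\nu^*(\mathcal{H}) \geq |\mathcal{E}|/d$, where $|\mathcal{E}| = \binom{n-2}{k-1}(k-1)!$ (choose and order the $k-1$ interior vertices of the path).

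The main computational step is therefore to count, for a fixed potential edge $e = \{x,y\} \in \mathcal{V}$, how many length-$k$ paths from $u$ to $v$ contain $e$. There are two cases. If $e$ is incident to $\{u,v\}$ (say $x = u$), then the remaining $k-1$ vertices of the path are $y$ together with an ordered choice of $k-2$ vertices from the remaining $n-3$, giving $\binom{n-3}{k-2}(k-2)!$ paths. If $e$ is not incident to $\{u,v\}$, then one first chooses a length-$(k-2)$ path from $u$ to $v$ avoiding $\{x,y\}$ and inserts $e$ into one of its $k-2$ edge slots in one of two orientations, giving $2(k-2)\binom{n-4}{k-3}(k-3)!$ paths. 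Comparing these two quantities reduces to comparing $(n-3)(k-1)$ with $2(k-2)^2$, or more cleanly to the inequality $k \leq (n-1)/2$ asserted in the statement.

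The main obstacle, such as it is, will be to confirm which of the two counts dominates in each regime; once this is settled, a direct simplification of the ratios $\binom{n-2}{k-1}(k-1)!/\binom{n-3}{k-2}(k-2)!$ and $\binom{n-2}{k-1}(k-1)!/(2(k-2)\binom{n-4}{k-3}(k-3)!)$ produces the lower bounds $n-2$ and $\frac{(n-2)(n-3)}{2(k-2)}$ on $\nu^*(\mathcal{H})$, respectively, and Theorem \ref{finnerbound} delivers the two desired inequalities. No separate argument is needed for $k = 2$ since the case $k \geq 3$ is explicitly assumed.
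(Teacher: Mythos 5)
Your proposal is correct and follows essentially the same route as the paper: build the $k$-uniform hypergraph on the potential edges of $K_n$ whose hyperedges are the length-$k$ paths from $u$ to $v$, take the uniform fractional matching $\phi \equiv 1/d$ with $d$ the maximum degree, and compute the two degree counts $\binom{n-3}{k-2}(k-2)!$ and $2(k-2)\binom{n-4}{k-3}(k-3)!$ exactly as the paper does. The only slip is in the final comparison, which reduces to $n-3$ versus $2(k-2)$ (not $(n-3)(k-1)$ versus $2(k-2)^2$), and this is indeed settled by the hypothesis $k\leq (n-1)/2$.
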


\subsubsection{Degrees}

Our paper ends with an estimate on the probability that
a $G\in \mathcal{G}(n,p)$ contains no vertex of fixed degree. \\

\begin{proposition} Let $G\in \mathcal{G}(n,p)$ and fix a positive integer $d\in \{0,1,\ldots,n-1\}$.
Then the probability that there is no vertex in $G$ whose degree equals $d$ is less than or equal to
\[\left(1-\binom{n-1}{d}p^d(1-p)^{n-1-d}\right)^{\frac{n}{2}}. \]
\end{proposition}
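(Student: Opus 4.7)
The plan is to apply Finner's inequality (Theorem \ref{mainthm}) on a hypergraph whose vertex set is the edge set of the complete graph $K_n$. Let $X_e = \mathbf{1}[e \in G]$ for every potential edge $e$ of $G$; these are independent Bernoulli$(p)$ variables and serve as the underlying independent variables. For each vertex $v$ of $G$ let $B_v = \mathbf{1}[\deg_G(v) = d]$. Then $B_v$ depends only on the $n-1$ indicators $X_e$ with $v \in e$, so $\mathbb{E}[B_v] = \binom{n-1}{d} p^d (1-p)^{n-1-d}$, and the event that no vertex has degree $d$ coincides with $\{\sum_v B_v = 0\}$, whose probability equals $\mathbb{E}\!\left[\prod_v (1-B_v)\right]$.

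I would next define $\mathcal{H} = (\mathcal{V}, \mathcal{E})$ by taking $\mathcal{V} = E(K_n)$ and $\mathcal{E} = \{E_v : v \in V(G)\}$, where $E_v = \{e \in E(K_n) : v \in e\}$ is the star at $v$, a hyperedge of size $n-1$. With this choice the random variables $Y_{E_v} := 1 - B_v$ are $\mathcal{H}$-correlated in the sense of Definition \ref{dephypergraph}, so Theorem \ref{mainthm} is available.

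The key combinatorial step is to exhibit a fractional matching of $\mathcal{H}$ of large total weight. Any vertex $\{u,w\} \in \mathcal{V}$ lies in exactly two hyperedges, namely $E_u$ and $E_w$, so the uniform assignment $\phi(E_v) := 1/2$ is a fractional matching of $\mathcal{H}$ with total weight $n/2$. Applying Theorem \ref{mainthm} with this $\phi$, and using that each $Y_{E_v}$ is $\{0,1\}$-valued (hence $Y_{E_v}^{1/\phi(E_v)} = Y_{E_v}^{2} = Y_{E_v}$), yields
\[
\mathbb{E}\!\left[\prod_v Y_{E_v}\right] \leq \prod_v \mathbb{E}[Y_{E_v}]^{1/2} = \left(1 - \binom{n-1}{d} p^d (1-p)^{n-1-d}\right)^{n/2},
\]
which is exactly the claimed bound.

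The argument follows the same template used for triangles and paths in the preceding subsections, and I do not anticipate a substantive obstacle. The one conceptual point worth flagging is that ``$v$ has degree exactly $d$'' is not a monotone edge-avoidance event, so Theorem \ref{finnerbound} cannot be invoked as a black box as in the earlier propositions; one must go back to Finner's inequality in its general form (Theorem \ref{mainthm}) and exploit the fact that an indicator is idempotent under any positive power to absorb the exponent $1/\phi(E_v)$.
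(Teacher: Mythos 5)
Your proof is correct and follows essentially the same route as the paper: the same hypergraph whose vertices are the potential edges of $K_n$ and whose hyperedges are the stars $E_v$, the same uniform fractional matching $\phi \equiv \tfrac{1}{2}$ of total weight $\tfrac{n}{2}$, and an application of Finner's inequality to the indicators $1-B_v$. You are also right, and slightly more careful than the paper's sketch, in observing that the event ``$\deg(v)=d$'' is not an independence event for a random vertex subset, so one must invoke Theorem \ref{mainthm} (absorbing the exponent via idempotence of the indicator) rather than cite Theorem \ref{finnerbound} verbatim.
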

\begin{proof} This is yet another application of Theorem \ref{finnerbound} so we sketch it.
Let $v_1,\ldots,v_n$ be an enumeration of the vertices of $G$.
Let the hypergraph $\mathcal{H}=(\mathcal{V},\mathcal{E})$ be  defined as follows.
The vertex set $\mathcal{V}$
corresponds to the (potential) edges of $G$. The edge set $\mathcal{E}=\{E_1,\ldots,E_n\}$ corresponds to the vertices
of $G$. That is, for $i=1,\ldots,n$  the edge $E_i$ contains
those $u\in \mathcal{V}$ for which the corresponding edges of $G$ are incident to vertex $v_i$.
The result follows from the fact that
$|\mathcal{E}|=n$ and the maximum degree of $\mathcal{H}$ is equal to $2$.
\end{proof}

\section{Remarks}
As mentioned in Janson \cite{Jansonthree}, there exist collections of weakly dependent random variables  
that do not have a dependency graph. The dependencies between such collections of random variables 
can ocasionally be described using an independence system.
Recall that an \emph{independence system} is a pair 
$\mathcal{A} = (V, I)$ where $V$ is a finite set and $I$ is a collection 
of subsets of $V$ (called the \emph{independent sets}) with the following properties (see \cite{Bondy}): 
\begin{itemize}
 \item The empty set is independent, i.e., $\emptyset\in I$. 
 (Alternatively, at least one subset of $V$ is independent, i.e., $I \neq \emptyset$.)
 \item Every subset of an independent set is independent, i.e., for each $A'\subset A\subset \mathcal{A}$, if $A\in I$ then $A'\in I$. 
 This is sometimes called the \emph{hereditary property}.
\end{itemize} 

Given a set of random variables $\{Y_v\}_{v\in V}$, we say that their joint distribution 
is \emph{described with an independence system}, say $\mathcal{A}=(V,I),$ if  
for every $A\in I$ the random variables $\{Y_a\}_{a\in A}$ are mutually independent.  
Let us remark that this definition includes the case of $k$-wise independent random variables 
(see \cite{Alon}, Chapter $16$, or  \cite{Schmidt}). 
Notice that if $\{Y_v\}_{v\in V}$ are random variables whose joint 
distribution is described with an independence system 
$\mathcal{A}=(V,I)$ then $\{v\}\in I$, for all $v\in V$. 
It is easy to see that if the random variables $\{Y_v\}_{v\in V}$ have a dependency graph 
then their joint distribution is described with an independence system. 
However, the converse need not be true.    
In a similar way as in Section \ref{intro}, one may define the fractional chromatic number 
of an independene system as follows. \\
A $b$-fold coloring of an independence system $\mathcal{A} = (V, I)$ 
is a function $\lambda:I\rightarrow \mathbb{Z}_{+}$ 
such that $\sum_{A: v\in A} \lambda(A) = b,$ for all $v\in V$. 
The $b$-fold chromatic number of $\mathcal{A}$ is defined as $\chi_b(\mathcal{A}):= 
\inf_{\lambda}  \sum_{A \in I} \lambda(A),$ where the infimum is over all 
$b$-fold colorings, $\lambda(\cdot)$, of $\mathcal{A}=(V,I)$.
Finally, the fractional chromatic number of $\mathcal{A}$ is $\chi^*(\mathcal{A}):= \inf_{b}  \frac{\chi_b(\mathcal{A})}{b}.$

With these concepts by hand, one can  
prove a corresponding H\"older-type inequality 
using a similar argument as in Theorem \ref{graphFinner}. As a consequence 
one can obtain tail bounds similar to Theorem \ref{chromaticthm} and Theorem \ref{variancebound}, the only 
difference being that 
the fractional chromatic number of dependency graphs, $\chi^*(G)$, is  replaced with the fractional 
chromatic number of the independence system, $\chi^*(\mathcal{A})$. We leave the details to the reader.  



\textbf{Acknowledgements}
The authors are supported by ERC Starting Grant 240186 "MiGraNT, Mining Graphs and Networks: a Theory-based approach".


\begin{thebibliography}{3}
\addcontentsline{toc}{chapter}{Bibliography}


\bibitem{Alon} N. Alon, J. Spencer, \textit{The Probabilistic Method}, 3rd Edition, J. Wiley and Sons,
New York, (2008).

\bibitem{Bogachev} V.I. Bogachev, \textit{Measure Theory}, Volume I, Springer, (2007).

\bibitem{Bondy} J.A. Bondy, U.S.R. Murty, \textit{Graph Theory}, Graduate Texts in Mathematics 244, Springer, (2008).

\bibitem{Boppana} R. Boppana, J. Spencer, \textit{A useful elementary correlation
inequality}, Journal
of Combinatorial Theory, Series A \textbf{50}, p. 305--307, (1982).

\bibitem{Boyd} S. Boyd, L. Vandenberghe, \textit{Convex Optimization}, Cambridge University
Press, Cambridge UK, (2004).

\bibitem{Finner} H. Finner, \textit{A generalization of H{\"o}lder's inequality and some
probability inequalities}, The Annals of Probability, Vol. 20, no. 4, p. 1893--1901, (1992).

\bibitem{Galvin} D. Galvin, Y. Zhao, \textit{ The number of independent sets in a graph with small maximum degree}, Graphs and Combinatorics \textbf{27}, p. 177--186, (2011).

\bibitem{Gavinsky} D. Gavinsky, S. Lovett, M. Saks, S. Srinivasan, \textit{A Tail Bound for Read-k
Families of Functions}, Random Structures \& Algorithms \textbf{47}, p. 99--108, (2015).


\bibitem{Janson} S. Janson, \textit{New versions of Suen's correlation inequality},
Random Structures \& Algorithms \textbf{13}, p. 467--483, (1998).

\bibitem{Jansonthree} S. Janson, \textit{Large Deviations for Sums of Partly Dependent Random Variables},
Random Structures \& Algorithms \textbf{24}, p. 234--248, (2004).

\bibitem{Jansontwo} S. Janson, T. Luczak, A. Rusi\'nski, \textit{Random Graphs}, Wiley, New York, (2000).


\bibitem{Jansonfour} S. Janson, L. Warnke, \textit{The Lower Tail: Poisson Approximation Revisited},
Random Structures \& Algorithms, DOI: 10.1002/rsa.20590,  (2015).


\bibitem{Krivelevich} M. Krivelevich, B. Sudakov, V.H. Vu, N.C. Wormald, \textit{On the probability
of independent sets in random graphs}, Random Structures \& Algorithms \textbf{22}, p. 1--14, (2003).

\bibitem{Ramon} J. Ramon, Y. Wang, Z. Guo , \textit{Learning from networked examples},
Journal of Machine Learning Research, (2015), (to appear), arXiv:1405.2600.

\bibitem{Schmidt} J.P. Schmidt, A.  Siegel, A. Srinivasan,  
\textit{Chernoff-Hoeffding bounds for applications with limited
independence}, SIAM Journal of Discete Mathematics, vol. 8, no. 2,  223--250 (1995).


\bibitem{Wolfovitz} G. Wolfovitz, \textit{A concentration result with application to subgraph count},
Random Structures \& Algorithms \textbf{40}, p. 254--267, (2012).

\end{thebibliography}
\end{document}